\newtheorem{proposition}{Proposition}
\theoremstyle{definition}
\newtheorem{definition}{Definition}
\newtheorem{example}{Example}
\theoremstyle{remark}
\newtheorem{remark}{Remark}
\DeclareMathOperator{\End}{End}
\DeclareMathOperator{\Hom}{Hom}
\DeclareMathOperator{\sgn}{sgn}
\begin{document}

\title[Multi-param. formal deformations of ternary hom-Nambu-Lie algebras]{Multi-parameter formal deformations of ternary hom-Nambu-Lie algebras}
\author{Per B\"ack}
\address{Division of Applied Mathematics, The School of Education, Culture and Communication, M\"alar\-dalen  University,  Box  883,  SE-721  23  V\"aster\r{a}s, Sweden,}
\email{per.back@mdh.se}

\subjclass[2010]{17A30, 17A40, 17B68, 16S80}
\keywords{ternary Nambu-Lie algebra, ternary hom-Nambu-Lie algebra, ternary Virasoro-Witt algebra, ternary formal deformation}

\begin{abstract}
In this note, we introduce a notion of multi-parameter formal deformations of ternary hom-Nambu-Lie algebras. Within this framework, we construct formal deformations of the three-dimensional Jacobian determinant and of the cross-product in four-dimensional Euclidean space. We also conclude that the previously defined ternary $q$-Virasoro-Witt algebra is a formal deformation of the ternary Virasoro-Witt algebra.
\end{abstract}

\maketitle

\section{Introduction}The notion of $n$-ary hom-Nambu-Lie algebras was introduced by Ataguema, Makhlouf, and Silvestrov in~\cite{AMS09}, including $n$-ary Nambu-Lie algebras as a special case. Generalizing an algebraic structure to a so-called hom-algebraic structure is often motivated by the fact that algebras that are rigid in terms of the former structure may be deformed when viewed as algebras in terms of the latter structure. For the simplest case $n=2$ of hom-Lie algebras, there seem to be many examples of formal deformations of Lie algebras into hom-Lie algebras (see e.g. \cite{Bac19, BR19, BRS18, MS10}). For the case $n=3$ of ternary hom-Nambu-Lie algebras, examples of formal deformations seem to be lacking, however. The purpose of this short, but self-contained note is to provide examples of formal deformations of ternary Nambu-Lie algebras into ternary hom-Nambu-Lie algebras, to devise a concrete method on how to construct these, and to define a framework in which they all fit.

\section{Preliminaries}
We denote by $\mathbb{N}$ the natural numbers including zero, and by $K$ a field of characteristic zero. The symmetric group on $p$ letters is written $S_p$.

\begin{definition}[Ternary hom-Nambu algebra] A \emph{ternary hom-Nambu algebra}, written $(V, [\cdot,\cdot,\cdot], (\alpha,\beta))$, consists of a $K$-vector space $V$, two $K$-linear maps $\alpha,\beta\colon V\to V$ called \emph{twisting maps}, and a $K$-trilinear map $[\cdot,\cdot,\cdot]\colon V\times V\times V\to V$ called the \emph{hom-Nambu bracket}. For all $x_1,\dots,x_5\in V$, they are required to satisfy the \emph{ternary hom-Nambu identity}:
\begin{align*}
&[\alpha(x_1),\beta(x_2),[x_3,x_4,x_5]]=[[x_1,x_2,x_3],\alpha(x_4),\beta(x_5)]\\
&+ [\alpha(x_3),[x_1,x_2,x_4],\beta(x_5)]+[\alpha(x_3),\beta(x_4),[x_1,x_2,x_5]].
\end{align*}
\end{definition}

\begin{definition}[Ternary hom-Nambu-Lie algebra] A \emph{ternary hom-Nam\-bu-Lie algebra} is a ternary hom-Nambu algebra $(V,[\cdot,\cdot,\cdot], (\alpha,\beta))$ where the bracket $[\cdot,\cdot,\cdot]$ is skew-symmetric, meaning that for all $x_1,x_2,x_3\in V$, $\sigma\in S_3$, 
\begin{equation*}
[x_{\sigma(1)},x_{\sigma(2)}, x_{\sigma(3)}]=\sgn(\sigma) [x_1,x_2,x_3].
\end{equation*}
\end{definition}

\begin{remark}If we put $\alpha=\beta=\mathrm{id}_V$ in the above definitions, we get a \emph{ternary Nambu algebra} and a \emph{ternary Nambu-Lie algebra}, respectively. Instead of writing $(V,[\cdot,\cdot,\cdot], (\mathrm{id}_V,\mathrm{id}_V))$, we then use the shorthand notation $(V,[\cdot,\cdot,\cdot])$.
\end{remark}

\begin{definition}[Morphisms of ternary hom-Nambu-(Lie) algebras] A \emph{morphism} from a ternary hom-Nambu-(Lie) algebra $A:=(V,[\cdot,\cdot,\cdot], (\alpha,\beta))$ to a ternary hom-Nambu-(Lie) algebra $A':=(V',[\cdot,\cdot,\cdot]', (\alpha',\beta'))$ is a $K$-linear map $f\colon V\to V'$ such that for all $x_1,x_2,x_3\in V$, $f([x_1,x_2,x_3])=[f(x_1),f(x_2),f(x_3)]'$, $f\circ\alpha = \alpha'\circ f$, and $f\circ\beta=\beta'\circ f$. We denote the set of all such maps by $\Hom_K(A,A')$, and put $\End_K(A):=\Hom_K(A,A)$ for the set of \emph{endomorphisms}.
\end{definition}

A ternary hom-Nambu-(Lie) algebra $A:=(V,[\cdot,\cdot,\cdot],(\alpha,\beta))$ is \emph{multiplicative} whenever $\alpha=\beta\in\End_K(A)$.

\begin{proposition}[\cite{AMS09}]\label{prop:morphism-construction} Let $A:=(V,[\cdot,\cdot,\cdot])$ be a ternary Nambu(-Lie) algebra and $\rho\in\End_K(A)$. Then $(V,\rho\circ[\cdot,\cdot,\cdot], (\rho,\rho))$ is a multiplicative, ternary hom-Nambu(-Lie) algebra.
\end{proposition}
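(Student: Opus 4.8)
The plan is to write the proposed bracket as $\{x_1,x_2,x_3\} := \rho([x_1,x_2,x_3])$ and verify the defining features in turn: $K$-trilinearity, the ternary hom-Nambu identity with $\alpha=\beta=\rho$, skew-symmetry in the Lie case, and finally multiplicativity. First I would note that $K$-trilinearity of $\{\cdot,\cdot,\cdot\}$ is immediate, since it is the composite of the $K$-trilinear map $[\cdot,\cdot,\cdot]$ with the $K$-linear map $\rho$.

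The heart of the argument is the hom-Nambu identity, and the crucial device is the endomorphism property $\rho([x,y,z])=[\rho(x),\rho(y),\rho(z)]$, which I would use to push every occurrence of $\rho$ inward. Expanding the left-hand side, $\{\rho(x_1),\rho(x_2),\{x_3,x_4,x_5\}\}=\rho([\rho(x_1),\rho(x_2),\rho([x_3,x_4,x_5])])=\rho([\rho(x_1),\rho(x_2),[\rho(x_3),\rho(x_4),\rho(x_5)]])$. Now I would invoke the original ternary Nambu identity (the case $\alpha=\beta=\mathrm{id}_V$) applied to the five elements $\rho(x_1),\dots,\rho(x_5)$, and then apply the linear map $\rho$ to both sides. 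This rewrites the left-hand side as a sum of three terms, each of the form $\rho$ applied to a nested original bracket of the $\rho(x_i)$. Treating each summand on the right-hand side of the hom-Nambu identity the same way, for instance $\{\{x_1,x_2,x_3\},\rho(x_4),\rho(x_5)\}=\rho([[\rho(x_1),\rho(x_2),\rho(x_3)],\rho(x_4),\rho(x_5)])$, I expect the three right-hand terms to match the three expanded left-hand terms verbatim.

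For the ternary hom-Nambu-Lie case I would observe that skew-symmetry of $\{\cdot,\cdot,\cdot\}$ follows directly from that of $[\cdot,\cdot,\cdot]$ together with $K$-linearity of $\rho$, since $\{x_{\sigma(1)},x_{\sigma(2)},x_{\sigma(3)}\}=\rho(\sgn(\sigma)[x_1,x_2,x_3])=\sgn(\sigma)\{x_1,x_2,x_3\}$. Finally, for multiplicativity I would check that $\rho$ is an endomorphism of the new algebra: the relation $\rho(\{x_1,x_2,x_3\})=\{\rho(x_1),\rho(x_2),\rho(x_3)\}$ reduces, via the endomorphism property, to the triviality $\rho(\rho([x_1,x_2,x_3]))=\rho(\rho([x_1,x_2,x_3]))$, and compatibility with the twisting maps is the identity $\rho\circ\rho=\rho\circ\rho$.

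I do not anticipate a genuine obstacle here; the only real care needed is the bookkeeping in the hom-Nambu identity, namely recognizing that the endomorphism property makes the inner and outer applications of $\rho$ align so that the two sides collapse term by term onto the original Nambu identity evaluated at the twisted arguments.
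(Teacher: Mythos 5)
Your proof is correct: the paper states this proposition without proof (it is quoted from \cite{AMS09}), and your argument --- pushing $\rho$ inward via the endomorphism property so that both sides of the hom-Nambu identity collapse onto the original Nambu identity evaluated at $\rho(x_1),\dots,\rho(x_5)$, then checking skew-symmetry and multiplicativity directly --- is exactly the standard verification. Nothing is missing.
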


Let us now take a look at some classical examples of ternary (hom-)Nambu-Lie algebras that can be found in the literature.

\begin{example}[The cross-product in $\mathbb{R}^4$]\label{ex:vector-prod}Denote by $\mathbb{R}^4$ be the four-dimensional Euclidean vector space over $\mathbb{R}$, and by $\{\vec{e}_1,\vec{e}_2,\vec{e}_3,\vec{e}_4\}$ its standard basis. Using the Einstein summation convention, let $\vec{x}:=x^i\vec{e}_i$,  $\vec{y}:=y^i\vec{e}_i$, and $\vec{z}:= z^i\vec{e}_i$ where $x^i,y^i,z^i\in\mathbb{R}$ are arbitrary and $1\leq i\leq 4$. We then get a ternary Nambu-Lie algebra $(\mathbb{R}^4,[\cdot,\cdot,\cdot])$ over $\mathbb{R}$ by defining the bracket as the cross-product in $\mathbb{R}^4$,
\begin{equation*}
[\vec{x},\vec{y},\vec{z}]:=\vec{x}\times \vec{y}\times \vec{z}:=\begin{vmatrix}x^1&y^1&z^1&\vec{e}_1\\x^2&y^2&z^2&\vec{e}_2\\x^3&y^3&z^3&\vec{e}_3\\ x^4&y^4&z^4&\vec{e}_4\end{vmatrix}.
\end{equation*}
\end{example}

\begin{example}[The three-dimensional Jacobian determinant~\cite{AMS09}]\label{ex:jacobian}Let $q_1, q_2,$ and $q_3$ be elements of the polynomial $K$-algebra $K[x_1,x_2,x_3]$ in three indeterminates $x_1, x_2,$ and $x_3$. Denote by $q$ the triplet $(q_1,q_2,q_3)$, by $x$ $(x_1,x_2,x_3)$, and by $J(q(x))=(\partial q_i/ \partial x_j)_{1\leq i,j\leq 3}$, the three-dimensional Jacobian. By defining 
\begin{equation*}
[q_1(x),q_2(x),q_3(x)]:=\det(J(q(x)))=\begin{vmatrix}\frac{\partial q_1}{\partial x_1}&\frac{\partial q_1}{\partial x_2}&\frac{\partial q_1}{\partial x_3}\\ \frac{\partial q_2}{\partial x_1}& \frac{\partial q_2}{\partial x_2}& \frac{\partial q_2}{\partial x_3}\\ \frac{\partial q_3}{\partial x_1}& \frac{\partial q_3}{\partial x_2} &\frac{\partial q_3}{\partial x_3}\end{vmatrix},
\end{equation*}
we get a ternary Nambu-Lie algebra $(K[x_1,x_2,x_3],[\cdot,\cdot,\cdot])$ over $K$. Now, any $\rho\in\End_K((K[x_1,x_2,x_3],[\cdot,\cdot,\cdot]))$ gives rise to a ternary hom-Nambu-Lie algebra by Proposition \ref{prop:morphism-construction}. Let $\gamma_1,\gamma_2,\gamma_3\in K[x_1,x_2,x_3]$, $\gamma:=(\gamma_1,\gamma_2,\gamma_3)$, and furthermore define $\rho_\gamma\colon K[x_1,x_2,x_3]\to K[x_1,x_2,x_3]$ by $q(x)\mapsto q(\gamma)$. As $J(\rho_\gamma(q_1),\rho_\gamma(q_2),\rho_\gamma(q_3))$ $=J(q(\gamma))J(\gamma(x))$, 
%\begin{align*}
%&J(\rho_\gamma(q_1),\rho_\gamma(q_2),\rho_\gamma(q_3))=\begin{pmatrix}\frac{\partial q_1}{\partial x_1}&\frac{\partial q_1}{\partial x_2}&\frac{\partial q_1}{\partial x_3}\\ \frac{\partial q_2}{\partial x_1}& \frac{\partial q_2}{\partial x_2}& \frac{\partial q_2}{\partial x_3}\\ \frac{\partial q_3}{\partial x_1}& \frac{\partial q_3}{\partial x_2} &\frac{\partial q_3}{\partial x_3}\end{pmatrix}\\
%&=\begin{pmatrix}\frac{\partial q_1}{\partial \gamma_1}&\frac{\partial q_1}{\partial \gamma_2}&\frac{\partial q_1}{\partial \gamma_3}\\ \frac{\partial q_2}{\partial \gamma_1}& \frac{\partial q_2}{\partial \gamma_2}& \frac{\partial q_2}{\partial \gamma_3}\\ \frac{\partial q_3}{\partial \gamma_1}& \frac{\partial q_3}{\partial \gamma_2} &\frac{\partial q_3}{\partial \gamma_3}\end{pmatrix}\begin{pmatrix}\frac{\partial \gamma_1}{\partial x_1}&\frac{\partial \gamma_1}{\partial x_2}&\frac{\partial \gamma_1}{\partial x_3}\\ \frac{\partial \gamma_2}{\partial x_1}& \frac{\partial \gamma_2}{\partial x_2}& \frac{\partial \gamma_2}{\partial x_3}\\ \frac{\partial \gamma_3}{\partial x_1}& \frac{\partial \gamma_3}{\partial x_2} &\frac{\partial \gamma_3}{\partial x_3}\end{pmatrix}=J(q(\gamma))J(\gamma(x)),
%\end{align*}
$\det(J(\rho_\gamma(q_1),\rho_\gamma(q_2),\rho_\gamma(q_3)))=\det(J(q(\gamma)))\det (J(\gamma(x)))$. We have that $\det(J(q(\gamma))=\rho_\gamma\left(\det (J(q(x)))\right)$, so by means of Proposition \ref{prop:morphism-construction}, we have a ternary hom-Nambu-Lie algebra $(K[x_1,x_2,x_3],[\cdot,\cdot,\cdot]_\gamma,(\rho_\gamma,\rho_\gamma))$ where $[\cdot,\cdot,\cdot]_\gamma:=\rho_\gamma\circ[\cdot,\cdot,\cdot]$, if $\det(J(\gamma(x)))=1$.
\end{example}

\begin{example}[The ternary $q$-Virasoro-Witt algebra \cite{AMS10}]\label{ex:vir-witt} Let $W$ be a vector space over $\mathbb{C}$ with generating set $\{Q_n,R_n\}_{n\in\mathbb{Z}}$, and $[\cdot,\cdot,\cdot]\colon W\times W\times W\to W$ the $\mathbb{C}$-trilinear, skew-symmetric bracket defined on the generators of $W$ by
\begin{align*} 
[Q_k,Q_m,Q_n]=&(k-m)(m-n)(k-n)R_{k+m+n},\\
[Q_k,Q_m,R_n]=&(k-m)(Q_{k+m+n}+znR_{k+m+n}),\\
[Q_k,R_m,R_n]=&(n-m)R_{k+m+n},\\
[R_k,R_m,R_n]=&0.
\end{align*}
Whenever $z=\pm 2i$, these relations define a ternary Nambu-Lie algebra $(W,[\cdot,\cdot,\cdot])$ over $\mathbb{C}$ called the \emph{ternary Virasoro-Witt algebra}, introduced by Curtright, Fairlie and Zachos in~\cite{CFZ08}. Ammar, Makhlouf, and Silvestrov found in~\cite{AMS10} that a $\mathbb{C}$-linear map $\rho_q\colon W\to W$ defined on the generators of $W$ by $\rho_q(Q_n):=q^nQ_n$ and $\rho_q(R_n):=q^nR_n$ for some $q\in\mathbb{C}$ is an endomorphism of the ternary Virasoro-Witt algebra. Hence, by Proposition \ref{prop:morphism-construction}, we have a ternary hom-Nambu-Lie algebra $(W,[\cdot,\cdot,\cdot]_q, (\rho_q,\rho_q))$, $[\cdot,\cdot,\cdot]_q:=\rho_q\circ[\cdot,\cdot,\cdot]$ whenever $z=\pm 2i$. This algebra is called the \emph{ternary $q$-Virasoro-Witt algebra}, including  the ternary Virasoro-Witt algebra in the case $q=1$. When $q\neq1$, one does in general not get a ternary Nambu-Lie algebra.
\end{example}

\begin{remark}When $z\neq \pm2i$, the ternary Virasoro-Witt algebra is in fact a hom-Nambu-Lie algebra \cite{AMS10}. 
\end{remark}

\section{Examples}
Guided by the way of constructing the ternary $q$-Virasoro-Witt algebra in Example \ref{ex:vir-witt} by means of Proposition \ref{prop:morphism-construction}, we will here apply the same method to the cross-product in $\mathbb{R}^4$ defined in Example \ref{ex:vector-prod} and the three-dimensional Jacobian determinant defined in Example \ref{ex:jacobian}.

\begin{example}[A deformed cross-product in $\mathbb{R}^4$]\label{ex:def-vector-prod}Using the same notation as in Example \ref{ex:vector-prod}, we would like to find all $\mathbb{R}$-linear maps $\rho\colon\mathbb{R}^4\to \mathbb{R}^4$ such that $\rho\left([\vec{x},\vec{y},\vec{z}]\right)=[\rho(\vec{x}),\rho(\vec{y}),\rho(\vec{z})]$. To this end, let $\rho(\vec{e}_l)= a_l^i\vec{e}_i$ where $a_l^i\in\mathbb{R}$, $1\leq i,l\leq 4$. By using the Kronecker delta $\delta_j^i$ and a Levi-Civita symbol $\varepsilon_{ijk}^l$,
\begin{align*}
\varepsilon_{ijk}^l:=&\begin{cases}+1&\text{if $(i,j,k,l)$ is an even permutation of $(1,2,3,4)$},\\-1&\text{if $(i,j,k,l)$ is an odd permutation of $(1,2,3,4)$},\\\phantom{\pm}0&\text{if any two indices are equal,} \end{cases}
\end{align*}

\begin{align*}
\rho\left([\vec{e}_l,\vec{e}_m,\vec{e}_n]\right)=&\rho\left(\begin{vmatrix}\delta^1_l&\delta^1_m&\delta^1_n&\vec{e}_1\\\delta^2_l&\delta^2_m&\delta^2_n&\vec{e}_2\\\delta^3_l&\delta^3_m&\delta^3_n&\vec{e}_3\\ \delta^4_l&\delta^4_m&\delta^4_n&\vec{e}_4\end{vmatrix}\right)=\rho\left(\varepsilon_{pqr}^s\delta^p_l\delta^q_m\delta^r_n\vec{e}_s\right)=\rho\left(\varepsilon_{lmn}^s\vec{e}_s\right)\\
=&\varepsilon_{lmn}^s\rho(\vec{e}_s)=\varepsilon_{lmn}^sa_s^t\vec{e}_t,\\
[\rho(\vec{e}_l),\rho(\vec{e}_m),\rho(\vec{e}_n)]=&\begin{vmatrix}a_l^1&a_m^1&a_n^1&\vec{e}_1\\ a_l^2&a_m^2&a_n^2&\vec{e}_2\\ a_l^3&a_m^3&a_n^3&\vec{e}_4\\ a_l^4&a_m^4&a_n^4&\vec{e}_4\\ \end{vmatrix} = \varepsilon^s_{pqr}a_l^pa_m^qa_n^r\vec{e}_s.
\end{align*}
By comparing coefficients, we get the following system of equations: $\varepsilon^s_{lmn}a^t_s=\varepsilon^t_{pqr}a_l^pa_m^qa_n^r$, $1\leq l,m,n,t\leq 4$. %When all of the indices $l,m,n$ are equal, the equations become null. Moreover, the equations are invariant under any change of place of two of the indices $l,m,n$, so there are only $16$ equations.
%\begin{align*}
%a^1_1=&a_3^3(a_2^2a_4^4-a_2^4a_4^2),& a^1_2=&a_4^3(a_3^2a_1^4-a_3^4a_1^2),\\
%a^2_1=&a_3^4(a_2^3a_4^1-a_2^1a_4^3),& a^2_2=&a_4^4(a_3^3a_1^1-a_3^1a_1^3),\\
%a^3_1=&a_3^1(a_2^4a_4^2-a_2^2a_4^4),& a^3_2=&a_4^1(a_3^4a_1^2-a_3^2a_1^4),\\
%a^4_1=&a_3^2(a_2^1a_4^3-a_2^3a_4^1),& a^4_2=&a_4^2(a_3^1a_1^3-a_3^3a_1^1),\\
%\\
%a^1_3=&a_1^3(a_4^2a_2^4-a_4^4a_2^2),&a^1_4=&a_2^3(a_1^2a_3^4-a_1^4a_3^2),\\
%a^2_3=&a_1^4(a_4^3a_2^1-a_4^1a_2^3),&a^2_4=&a_2^4(a_1^3a_3^1-a_1^1a_3^3),\\
%a^3_3=&a_1^1(a_4^4a_2^2-a_4^2a_2^4),&a^3_4=&a_2^1(a_1^4a_3^2-a_1^2a_3^4),\\
%a^4_3=&a_1^2(a_4^1a_2^3-a_4^3a_2^1),&a^4_4=&a_2^2(a_1^1a_3^3-a_1^3a_3^1).
%\end{align*}
%We note that $a_1^1/a_3^3=-a_1^3/a_3^1=-a_3^1/a_1^3$ and $a_2^2/a_4^4=-a_2^4/a_4^2=-a_4^2/a_2^4$. Hence $(a_1^3)^2=(a_3^1)^2$ and $(a_2^4)^2=(a_4^2)^2$, so $a_1^1=\pm a_3^3$ and $a_2^2=\pm a_4^4$. 
A solution $\rho_\theta$, $\theta:=(\theta_1,\theta_2)$, is $a_1^1=a_3^3=\cos\theta_1$, $a_2^2=a_4^4=\cos\theta_2$, $a_1^3=-a_3^1=\sin\theta_1$, $a_2^4=-a_4^2=\sin\theta_2$, as the only nonzero elements, $\theta_1,\theta_2\in\mathbb{R}$. In matrix form,

\begin{align*}
\rho_{\theta}=&\begin{pmatrix}a_1^1 &a_2^1& a_3^1 & a_4^1\\ a_1^2 &a_2^2& a_3^2 & a_4^2\\ a_1^3 &a_2^3& a_3^3 & a_4^3\\ a_1^4 &a_2^4& a_3^4 & a_4^4\end{pmatrix} = \begin{pmatrix}\cos \theta_1&0&-\sin \theta_1&0\\ 0&\cos \theta_2&0&-\sin \theta_2\\ \sin \theta_1&0&\cos \theta_1&0 \\ 0&\sin \theta_2&0&\cos \theta_2\end{pmatrix}\\
=&\begin{pmatrix}\cos\theta_1&0&-\sin\theta_1&0\\0&1&0&0\\\sin\theta_1&0&\cos\theta_1&0\\0&0&0&1\end{pmatrix}\begin{pmatrix}1&0&0&0\\0&\cos\theta_2&0&-\sin\theta_2\\0&0&1&0\\0&\sin\theta_2&0&\cos\theta_2\end{pmatrix}.
\end{align*}
Thus, we see that $\rho_\theta$ is the product of two rotation matrices (which commute): one in the $\vec{e}_1\vec{e}_3$-plane by an angle $\theta_1$, and one in the $\vec{e}_2\vec{e}_4$-plane by an angle $\theta_2$. We put $[\cdot,\cdot,\cdot]_\theta:=\rho_\theta\circ[\cdot,\cdot,\cdot]$ and call the hom-Nambu-Lie algebra $(\mathbb{R}^4,[\cdot,\cdot,\cdot]_\theta,(\rho_\theta,\rho_\theta))$ a \emph{deformed cross-product in $\mathbb{R}^4$} (in the last section, we will justify the name \emph{deformed}). In general, this is not a Nambu-Lie algebra. Put e.g. $\theta_1=\theta_2=\pi/2$ and $\vec{e}_5:=\vec{e}_1+\vec{e}_2+\vec{e}_4.$ Then $[\vec{e}_1,\vec{e}_2,[\vec{e}_3,\vec{e}_4,\vec{e}_5]_{\theta}]_{\theta}=\vec{e}_1+\vec{e}_2$, while $[[\vec{e}_1,\vec{e}_2,\vec{e}_3]_\theta,\vec{e}_4,\vec{e}_5]_\theta+[\vec{e}_3,[\vec{e}_1,\vec{e}_2,\vec{e}_4]_\theta,\vec{e}_5]_\theta+[\vec{e}_3,\vec{e}_4,[\vec{e}_1,\vec{e}_2,\vec{e}_5]_\theta]_\theta=-\vec{e}_1-\vec{e}_2$.
%\begin{equation*}
%\rho_{\pi/2}=\begin{pmatrix}0&0&-1&0\\0&1&0&0\\1&0&0&0\\0&0&0&1\end{pmatrix}\begin{pmatrix}1&0&0&0\\0&0&0&-1\\0&0&1&0\\0&1&0&0\end{pmatrix}=\begin{pmatrix}0&0&-1&0\\0&0&0&-1\\1&0&0&0\\0&1&0&0\end{pmatrix},
%\end{equation*}
%so $\rho(\vec{e}_1)=\vec{e}_3$, $\rho(\vec{e}_2)=\vec{e}_4$, $\rho(\vec{e}_3)=-\vec{e}_1$, $\rho(\vec{e}_4)=-\vec{e}_2$, $\rho(\vec{e}_5)=-\vec{e}_2+\vec{e}_3+\vec{e}_4$,
%\begin{align*}
%\rho^2[\vec{e}_3,\vec{e}_4,\vec{e}_5]=&\rho^2(\vec{e}_2-\vec{e}_1)=\rho(\vec{e}_4-\vec{e}_3)=\vec{e}_1-\vec{e}_2,\\
%\rho^2[\vec{e}_1,\vec{e}_2,\vec{e}_3]=&\rho^2(\vec{e}_4)=-\vec{e}_4,\\
%\rho^2[\vec{e}_1,\vec{e}_2,\vec{e}_4]=&\rho^2(-\vec{e}_3)=\vec{e_3},\\
%\rho^2[\vec{e}_1,\vec{e}_2,\vec{e}_5]=&\rho^2(-\vec{e}_3)=\vec{e}_3.
%\end{align*} 
%Therefore,
%\begin{align*}
%[\rho(\vec{e}_1),\rho(\vec{e}_2),\rho^2[\vec{e}_3,\vec{e}_4,\vec{e}_5]]=&[\vec{e}_3,\vec{e}_4,\vec{e}_1-\vec{e}_2]=\vec{e}_1+\vec{e}_2,\\
%[\rho^2[\vec{e}_1,\vec{e}_2,\vec{e}_3],\rho(\vec{e}_4),\rho(\vec{e}_5)]=&[-\vec{e}_4,-\vec{e}_2,-\vec{e}_2+\vec{e}_3+\vec{e}_4]=-\vec{e}_1,\\
%[\rho(\vec{e}_3),\rho^2[\vec{e}_1,\vec{e}_2,\vec{e}_4], \rho(\vec{e}_5)]=&[-\vec{e}_1,\vec{e}_3,-\vec{e}_2+\vec{e}_3+\vec{e}_4]=-\vec{e}_2-\vec{e}_4,\\
%[\rho(\vec{e}_3),\rho(\vec{e}_4),\rho^2[\vec{e}_1,\vec{e}_2,\vec{e}_5]]=&[-\vec{e}_1,-\vec{e}_2,\vec{e}_3]=\vec{e}_4.
%\end{align*}

\end{example}

\begin{example}[A deformed three-dimensional Jacobian determinant]\label{ex:def-jacobian}Using the same notation as in Example \ref{ex:jacobian}, we would like to find a nontrivial $\gamma(x)$ with $\det(J(\gamma(x)))=1$. We start with the simple, but nontrivial assumption that $J(\gamma(x))$ is an upper triangular matrix (the case when $J(\gamma(x))$ is a lower triangular matrix is analogous). Using that $\det(J(\gamma(x)))$ is the product of all the diagonal entries of $J(\gamma(x))$, one readily verifies that $\det(J(\gamma(x)))=1$ if and only if $\gamma(x_1,x_2,x_3)=(k_1x_1+p_1(x_2,x_3), k_2x_2+p_2(x_3), k_3x_3+k_4)$ for some $p_1(x_2,x_3)\in K[x_2,x_3]$, $p_2(x_2)\in K[x_2]$, and $k_1,k_2,k_3,k_4\in K$ where $k_1k_2k_3=1$. A basis of $K[x_1,x_2,x_3]$ as a $K$-vector space consists of all monomials $x_1^lx_2^mx_3^n$ where $l,m,n\in\mathbb{N}$, and so if we define $\rho_\gamma(x_1^lx_2^mx_3^n):=(k_1x_1+p_1(x_2,x_3))^l(k_2x_2+p_2(x_3))^m(k_3x_3+k_4)^n$ and then extend the definition linearly, we have a ternary hom-Nambu-Lie algebra $(K[x_1,x_2,x_3],[\cdot,\cdot,\cdot]_\gamma, (\rho_\gamma,\rho_\gamma))$ where $[\cdot,\cdot,\cdot]_\gamma:=\rho_\gamma\circ[\cdot,\cdot,\cdot]$. We refer to it as a \emph{deformed three-dimensional Jacobian determinant} (again, we will justify the name \emph{deformed} in the last section). In general this is not a Nambu-Lie algebra. Take e.g. $p_1(x_2,x_3)=p_2(x_3)=0$, $k_1=k_2=k_3=1$, and $q_1:=x_1, q_2:=x_2, q_3:=x_3^3, q_4:=x_1^2, q_5:=x_2x_3$. Then $[q_1,q_2,[q_3,q_4,q_5]_\gamma]_\gamma=18x_1(x_3+2k_4)^2$, while $[[q_1,q_2,q_3]_\gamma,q_4,q_5]_\gamma+[q_3,[q_1,q_2,q_4]_\gamma,q_5]_\gamma+[q_3,q_4,[q_1,q_2,q_5]_\gamma]_\gamma=6x_1(x_3+k_4)(3x_3+5k_4)$, so the two expressions are equal if and only if $k_4=0$, in which case we have the original three-dimensional Jacobian determinant.
%\begin{align*}
%\rho^2_\gamma[q_3,q_4,q_5]=&\rho^2_\gamma[x_3^3,x_1^2,x_2x_3]=\rho^2_\gamma[x_1^2,x_2x_3,x_3^3]=6x_1(x_3+2k)^3,\\
%\rho^2_\gamma[q_1,q_2,q_3]=&3(x_3+2k)^2,\\
%\rho^2_\gamma[q_1,q_2,q_4]=&\rho^2_\gamma[x_1,x_2,x_1^2]=0,\\
%\rho^2_\gamma[q_1,q_2,q_5]=&\rho^2_\gamma[x_1,x_2,x_2x_3]=x_2.
%\end{align*}
%Therefore,
%\begin{align*}
%[\rho_\gamma(q_1),\rho_\gamma(q_2),\rho_\gamma^2[q_3,q_4,q_5]]=&[x_1,x_2,6x_1(x_3+2k)^3]=18x_1(x_3+2k)^2,\\
%[\rho_\gamma^2[q_1,q_2,q_3],\rho(q_4),\rho(q_5)]=&[3(x_3+2k)^2,x_1^2,x_2(x_3+k)]=12x_1(x_3+k)(x_3+2k),\\
%[\rho_\gamma(q_3),\rho_\gamma^2[q_1,q_2,q_4],\rho(q_5)]=&0,\\
%[\rho_\gamma(q_3),\rho_\gamma(q_4),\rho_\gamma^2[q_1,q_2,q_5]]=&[(x_3+k)^3,x_1^2,x_2]=6x_1(x_3+k)^2.
%\end{align*}
\end{example}

\section{Multi-parameter formal deformations}
One-parameter formal ternary hom-Nambu-Lie deformations were defined in~\cite{AMM11}. Here, we generalize that notion to multi-parameter analogues. 
\begin{definition}[Multi-parameter formal ternary hom-Nambu(-Lie) deformation]\label{def:multi-param-def}An \emph{$n$-parameter formal ternary hom-Nambu(-Lie) deformation} of a ternary hom-Nam\-bu(-Lie) algebra $(V,[\cdot,\cdot,\cdot]_0,(\alpha_0,\beta_0))$ over $K$ is a ternary hom-Nambu(-Lie) algebra $(V[[t_1,t_2,\ldots,t_n]], [\cdot,\cdot,\cdot]_t,(\alpha_t,\beta_t))$ over $K[[t_1,t_2,\ldots,t_n]]$ where $n\in\mathbb{N}_{>0}$, $t:=(t_1,t_2,\ldots,t_n)$, and
\begin{equation*}
[\cdot,\cdot,\cdot]_t=\sum_{i\in\mathbb{N}^n} [\cdot,\cdot,\cdot]_i t^i,\quad \alpha_t=\sum_{i\in\mathbb{N}^n} \alpha_it^i,\quad \beta_t=\sum_{i\in\mathbb{N}^n} \beta_it^i.
\end{equation*}
Here, $i:=(i_1,i_2,\ldots,i_n)\in\mathbb{N}^n$, $t^i:=t_1^{i_1}t_2^{i_2}\cdots t_n^{i_n},$ and $[\cdot,\cdot,\cdot]_i\colon V\times V\times V\to V$ is a $K$-trilinear operation, $\alpha_i,\beta_i\colon V\to V$ two $K$-linear maps, extended homogeneously to a $K[[t_1,t_2,\ldots,t_n]]$-trilinear operation, $[\cdot,\cdot,\cdot]_i\colon V[[t_1,t_2,\ldots,t_n]]\times V[[t_1,t_2,\ldots,t_n]]\times V[[t_1,t_2,\ldots,t_n]]\to V[[t_1,t_2,\ldots,t_n]]$, and two $K[[t_1,t_2,\ldots,t_n]]$-linear maps $\alpha_i,\beta_i\colon V[[t_1,t_2,\ldots,t_n]]\to V[[t_1,t_2,\ldots,t_n]]$, respectively.
\end{definition}

In the above definition, a map $f\colon V\to V$ is said to be \emph{extended homogeneously} to a map $f\colon V[[t_1,t_2,\ldots,t_n]]\to V[[t_1,t_2,\ldots,t_n]]$ when $f(at^i):=f(a)t^i$ for all $a\in V$ and $i\in\mathbb{N}^n$. The case for ternary maps is analogous.

\begin{remark}Note that there is some slight abuse of notation in Definition \ref{def:multi-param-def}. The maps $[\cdot,\cdot,\cdot]_0$, $\alpha_0$, and $\beta_0$ are the maps $[\cdot,\cdot,\cdot]_i$, $\alpha_i$, and $\beta_i$ where $i=(0,0,\ldots,0)\in\mathbb{N}^n$.
\end{remark} 

\begin{proposition}The ternary $q$-Virasoro-Witt algebra is a one-parameter formal ternary hom-Nambu-Lie deformation of the ternary Virasoro-Witt algebra.
\end{proposition}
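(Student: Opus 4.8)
The plan is to exhibit the ternary $q$-Virasoro-Witt algebra of Example~\ref{ex:vir-witt} as a formal deformation by reparametrizing the deformation parameter $q$ so that the undeformed value $q=1$ corresponds to the formal variable $t=0$. Concretely, I would substitute $q=e^t$ (any series with constant term $1$, such as $q=1+t$, serves equally well), which is a unit in $\mathbb{C}[[t]]$, and define $\rho_t\colon W[[t]]\to W[[t]]$ on generators by $\rho_t(Q_n):=e^{nt}Q_n$ and $\rho_t(R_n):=e^{nt}R_n$, extended $\mathbb{C}[[t]]$-linearly. I would then set $\alpha_t=\beta_t:=\rho_t$ and $[\cdot,\cdot,\cdot]_t:=\rho_t\circ[\cdot,\cdot,\cdot]$ over $\mathbb{C}[[t]]$.

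To bring this into the shape required by Definition~\ref{def:multi-param-def}, I would expand $e^{nt}=\sum_{i\in\mathbb{N}}\frac{n^i}{i!}t^i$, giving $\alpha_i=\beta_i$ defined on generators by $\alpha_i(Q_n)=\frac{n^i}{i!}Q_n$ and $\alpha_i(R_n)=\frac{n^i}{i!}R_n$, and correspondingly $[\cdot,\cdot,\cdot]_i$ as the coefficient of $t^i$ in $\rho_t\circ[\cdot,\cdot,\cdot]$; one checks that the coefficient of each $t^k$ in $\rho_t$ applied to an element of $W[[t]]$ is a finite sum, so these maps are well defined. The degree-zero terms satisfy $\alpha_0=\beta_0=\mathrm{id}_W$ and $[\cdot,\cdot,\cdot]_0=[\cdot,\cdot,\cdot]$, so the specialization $t=0$ returns exactly the ternary Virasoro-Witt algebra $(W,[\cdot,\cdot,\cdot])$, which is the requirement that the family deform that algebra.

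The crux is to verify that $(W[[t]],[\cdot,\cdot,\cdot]_t,(\rho_t,\rho_t))$ is genuinely a ternary hom-Nambu-Lie algebra over $\mathbb{C}[[t]]$. Skew-symmetry of $[\cdot,\cdot,\cdot]_t$ is immediate from that of $[\cdot,\cdot,\cdot]$ together with the $\mathbb{C}[[t]]$-linearity of $\rho_t$. For the hom-Nambu identity, I would show that $\rho_t$ is an endomorphism of the base-extended algebra, that is, $\rho_t([x,y,z])=[\rho_t(x),\rho_t(y),\rho_t(z)]$, and then invoke Proposition~\ref{prop:morphism-construction}. This endomorphism check is precisely the generator-by-generator computation already performed for numerical $q$ in Example~\ref{ex:vir-witt}, now with $q$ replaced by $e^t$: it succeeds because each defining relation sends generators of indices $k,m,n$ to generators of total index $k+m+n$, so applying $\rho_t$ pulls out the same factor $e^{(k+m+n)t}$ on both sides of every relation.

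The point requiring care is that Proposition~\ref{prop:morphism-construction} is stated over a field, whereas $\mathbb{C}[[t]]$ is only a commutative local ring. I would resolve this by observing that the proposition's proof is a purely formal manipulation of the bracket and the endomorphism property and is valid verbatim over any commutative unital ring; no division is used. Alternatively one can bypass it by permanence of identities: for fixed basis arguments the two sides of the hom-Nambu identity are, after the substitution $q=e^t$, Laurent polynomials in $q$ whose coefficients agree for every $q\in\mathbb{C}$ and hence agree identically. Either route yields the hom-Nambu identity in $W[[t]]$ and identifies the resulting one-parameter deformation, by construction, with the ternary $q$-Virasoro-Witt algebra for $q=e^t$, completing the argument.
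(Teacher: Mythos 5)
Your proposal is correct and follows essentially the same route as the paper, whose entire proof is to set $t:=q-1$ and regard it as a formal parameter; your substitution $q=e^t$ (and your remark that $q=1+t$ works equally well) is the same reparametrization of $q$ about $1$, just carried out with all the details---well-definedness of the coefficient maps, recovery of the Virasoro-Witt algebra at $t=0$, and the validity of Proposition~\ref{prop:morphism-construction} over $\mathbb{C}[[t]]$---made explicit.
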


\begin{proof}Put $t:=q-1$ and regard it as a formal parameter.
\end{proof}

\begin{proposition}The deformed cross-product is a two-parameter formal ternary hom-Nambu-Lie deformation of the cross-product in $\mathbb{R}^4$.
\end{proposition}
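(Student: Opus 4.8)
The plan is to identify the two deformation parameters with the rotation angles by setting $t_1 := \theta_1$ and $t_2 := \theta_2$ and regarding them as formal variables, so that $K = \mathbb{R}$ and $n = 2$ in Definition \ref{def:multi-param-def}. The base algebra at $t = 0$ should be the cross-product $(\mathbb{R}^4, [\cdot,\cdot,\cdot])$ itself, that is, $[\cdot,\cdot,\cdot]_0 = [\cdot,\cdot,\cdot]$ and $\alpha_0 = \beta_0 = \mathrm{id}_{\mathbb{R}^4}$. I would first confirm this by noting that $\rho_\theta$ at $\theta = 0$ has $\cos\theta_i \mapsto 1$ and $\sin\theta_i \mapsto 0$, so $\rho_0 = \mathrm{id}_{\mathbb{R}^4}$ and $[\cdot,\cdot,\cdot]_0 = \rho_0 \circ [\cdot,\cdot,\cdot] = [\cdot,\cdot,\cdot]$, as required.

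Next I would produce the power-series expansions required by Definition \ref{def:multi-param-def}. Every entry of the matrix $\rho_\theta$ is one of $0$, $1$, $\cos\theta_1$, $\cos\theta_2$, $\pm\sin\theta_1$, or $\pm\sin\theta_2$, each of which is an entire function of a single angle with a standard Taylor series. Collecting these gives $\rho_\theta = \sum_{i \in \mathbb{N}^2} \rho_i t^i$ for uniquely determined $\mathbb{R}$-linear maps $\rho_i \colon \mathbb{R}^4 \to \mathbb{R}^4$, and I set $\alpha_t = \beta_t := \rho_\theta$, so that $\alpha_i = \beta_i = \rho_i$. Since $[\cdot,\cdot,\cdot]_\theta = \rho_\theta \circ [\cdot,\cdot,\cdot]$, linearity of composition yields $[\cdot,\cdot,\cdot]_\theta = \sum_{i \in \mathbb{N}^2} (\rho_i \circ [\cdot,\cdot,\cdot]) t^i$, so I take $[\cdot,\cdot,\cdot]_i := \rho_i \circ [\cdot,\cdot,\cdot]$, each of which is $\mathbb{R}$-trilinear. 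Extending these coefficient maps homogeneously then furnishes exactly the data of Definition \ref{def:multi-param-def}.

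The remaining and essential step is to verify that $(\mathbb{R}^4[[t_1,t_2]], [\cdot,\cdot,\cdot]_t, (\alpha_t, \beta_t))$ is a genuine ternary hom-Nambu-Lie algebra over $\mathbb{R}[[t_1,t_2]]$, i.e. that skew-symmetry and the hom-Nambu identity hold as formal power-series statements. Skew-symmetry of $[\cdot,\cdot,\cdot]_t$ is immediate, since it is inherited coefficient-wise from $[\cdot,\cdot,\cdot]$. For the hom-Nambu identity I would exploit that Example \ref{ex:def-vector-prod} has already shown $\rho_\theta \in \End_{\mathbb{R}}((\mathbb{R}^4,[\cdot,\cdot,\cdot]))$ for every real pair $(\theta_1,\theta_2)$, so that Proposition \ref{prop:morphism-construction} makes the identity hold pointwise for all real values of the angles. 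By $\mathbb{R}[[t_1,t_2]]$-multilinearity it suffices to test the identity on the basis vectors $\vec{e}_1,\dots,\vec{e}_4$; for any such choice each side evaluates to a vector in $\mathbb{R}^4$ whose coordinates are finite trigonometric polynomials, hence entire functions, of $\theta_1$ and $\theta_2$.

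The main obstacle is precisely this transfer from the pointwise identity to the formal one: I must argue that two power series whose evaluations agree at every real $(\theta_1,\theta_2)$ are in fact equal as elements of $\mathbb{R}^4[[t_1,t_2]]$. Since the coordinate functions are entire and agree on all of $\mathbb{R}^2$, their Taylor expansions about the origin coincide, and these Taylor expansions are exactly the formal power series obtained by substituting the series for $\cos$ and $\sin$ into the expressions above. Hence the difference of the two sides of the hom-Nambu identity is the zero power series, the identity holds over $\mathbb{R}[[t_1,t_2]]$, and the deformation satisfies every requirement of Definition \ref{def:multi-param-def}.
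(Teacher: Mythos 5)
Your proof is correct and takes essentially the same approach as the paper's, whose entire argument is: put $t_1:=\theta_1$, $t_2:=\theta_2$, regard them as formal parameters, and replace $\cos t_k$ and $\sin t_k$ by their formal power series. The additional work you do---checking $\rho_0=\mathrm{id}_{\mathbb{R}^4}$, and transferring the pointwise hom-Nambu identity to the formal one via the fact that the coordinates are entire functions whose Taylor expansions at the origin are exactly the formal substitutions---is a sound filling-in of details the paper leaves implicit, not a different route.
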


\begin{proof}Put $t_1:=\theta_1$ and $t_2:=\theta_2$ and regard them as formal parameters. Replace $\cos t_k$ and $\sin t_k$ for $1\leq k\leq 2$ with their corresponding formal power series $\sum_{i=0}^\infty \frac{(-1)^i}{(2i)!}t_k^{2i}$ and $\sum_{i=0}^\infty\frac{(-1)^i}{(2i+1)!}t_k^{2i+1}$, respectively.
\end{proof}

\begin{proposition}The deformed three-dimensional Jacobian determinant is a multi-parameter formal ternary hom-Nambu-Lie deformation of the three-dimensional Jacobian determinant.
\end{proposition}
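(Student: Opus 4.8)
The plan is to identify the formal parameters hiding in the construction of the deformed three-dimensional Jacobian determinant in Example~\ref{ex:def-jacobian}, and then exhibit the deformed algebra as a formal power series deformation of the original one in exactly those parameters. Recall that the deformed bracket is $[\cdot,\cdot,\cdot]_\gamma=\rho_\gamma\circ[\cdot,\cdot,\cdot]$, and the entire deformation is governed by the endomorphism $\rho_\gamma$, which in turn is determined by the data $\gamma(x_1,x_2,x_3)=(k_1x_1+p_1(x_2,x_3),\,k_2x_2+p_2(x_3),\,k_3x_3+k_4)$ subject to $k_1k_2k_3=1$. The undeformed algebra corresponds to the choice $\rho_\gamma=\mathrm{id}$, i.e. $p_1=p_2=0$, $k_4=0$, and $k_1=k_2=k_3=1$. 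So the natural strategy is to introduce formal parameters measuring the deviation of $\gamma$ from this trivial choice.

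First I would set up the parameters. Since $k_1k_2k_3=1$, only two of the $k_i$ are free; I would write $k_1=1+t_1$ and $k_2=1+t_2$, and then $k_3=(1+t_1)^{-1}(1+t_2)^{-1}$, which is a well-defined element of $K[[t_1,t_2]]$ via the geometric series. Next I would parametrize the polynomial $p_1(x_2,x_3)=\sum_{m,n} c_{mn}t_{mn}x_2^mx_3^n$ and $p_2(x_3)=\sum_n d_n s_n x_3^n$ by assigning an independent formal parameter to each coefficient, and likewise set $k_4=t_0$. Collecting all of these into a single tuple $t=(t_0,t_1,t_2,\dots)$ (finitely many if one fixes the degrees of $p_1,p_2$ in advance, which is the honest way to stay within Definition~\ref{def:multi-param-def}), one obtains $\alpha_t=\beta_t=\rho_\gamma$ and $[\cdot,\cdot,\cdot]_t=\rho_\gamma\circ[\cdot,\cdot,\cdot]$ as maps over $K[[t]]$, with the property that specializing every $t_j=0$ recovers $\rho_\gamma=\mathrm{id}$ and the undeformed bracket.

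Second I would verify that $\rho_\gamma$ and $[\cdot,\cdot,\cdot]_t$ genuinely expand as formal power series with $K$-linear (respectively $K$-trilinear) coefficients, as required by Definition~\ref{def:multi-param-def}. This amounts to checking that for each basis monomial $x_1^lx_2^mx_3^n$, the image $\rho_\gamma(x_1^lx_2^mx_3^n)=(k_1x_1+p_1)^l(k_2x_2+p_2)^m(k_3x_3+k_4)^n$, when expanded, has coefficients lying in $K[[t]]$ and reduces to $x_1^lx_2^mx_3^n$ at $t=0$; since $k_3^{-1}$ is the only nonpolynomial ingredient and it lies in $K[[t]]$, the whole expression is a well-defined formal power series with $K$-coefficients in each fixed monomial of $K[x_1,x_2,x_3]$. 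Extending homogeneously then yields the $\alpha_i=\beta_i$ and $[\cdot,\cdot,\cdot]_i$ demanded by the definition.

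Finally, it remains to confirm that the resulting data satisfies the ternary hom-Nambu-Lie identity over $K[[t]]$, but this is free: for every specialization of the parameters $\rho_\gamma$ is an endomorphism of the original Nambu-Lie algebra with $\det(J(\gamma(x)))=1$, so by Proposition~\ref{prop:morphism-construction} the triple $(K[x_1,x_2,x_3],[\cdot,\cdot,\cdot]_\gamma,(\rho_\gamma,\rho_\gamma))$ is a ternary hom-Nambu-Lie algebra for all parameter values, and the identity therefore holds identically as an equation in $K[[t]]$. The main obstacle I anticipate is not analytical but definitional: the parameter set must be finite to fit Definition~\ref{def:multi-param-def} literally, so I would either fix bounds on the degrees of $p_1$ and $p_2$ (obtaining a genuine $n$-parameter deformation for each such bound) or remark that allowing $p_1,p_2$ of unbounded degree requires the mild extension of the definition to countably many formal parameters, under which the statement holds verbatim.
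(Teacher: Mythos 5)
Your proposal is correct and follows essentially the same route as the paper, whose entire proof reads: ``Choose $k_1,k_2,k_3$ such that $k_1k_2k_3=1$ and regard $k_4$ and all the coefficients in the polynomials $p_1(x_2,x_3)$, $p_2(x_3)$ as formal parameters.'' The one substantive difference is your treatment of $k_1,k_2,k_3$: the paper fixes them as scalars and deforms only the remaining data, whereas you set $k_1=1+t_1$, $k_2=1+t_2$, $k_3=(1+t_1)^{-1}(1+t_2)^{-1}\in K[[t_1,t_2]]$, adding two parameters. This refinement is actually a point in your favour: for the zeroth-order term to be the undeformed Jacobian determinant, as Definition~\ref{def:multi-param-def} and the remark following it require, one needs $\rho_\gamma$ to reduce to the identity when all parameters vanish, which forces $k_1=k_2=k_3=1$; your parametrization builds this in, while the paper's phrasing leaves it implicit. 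Your concern about infinitely many parameters is moot, since a given deformed Jacobian determinant in Example~\ref{ex:def-jacobian} comes with a fixed $\gamma$ and hence finitely many coefficients, but fixing degree bounds in advance is an equivalent resolution. The only soft spot is your closing step ``the identity holds for every specialization, hence identically over $K[[t]]$'': formal power series over a general field of characteristic zero cannot be evaluated at nonzero parameter values, so this needs either a denominator-clearing argument reducing to a polynomial identity over an infinite field, or, more cleanly, the observation that $\det(J(\gamma))=k_1k_2k_3=1$ holds identically in $K[[t]][x_1,x_2,x_3]$ and that the chain-rule computation of Example~\ref{ex:jacobian} and the proof of Proposition~\ref{prop:morphism-construction} are purely formal and hence valid verbatim over the ring $K[[t]]$. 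Either patch is a one-line fix, so I would not count this as a gap; on the whole your argument is more careful than the one the paper gives.
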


\begin{proof}Choose $k_1,k_2,k_3$ such that $k_1k_2k_3=1$ and regard $k_4$ and all the coefficients in the polynomials $p_1(x_2,x_3)$, $p_2(x_3)$ as formal parameters.
\end{proof}

\section{Acknowledgments}
I wish to thank Joakim Arnlind for some initial discussions.

\end{document}